\title{A Null Ideal for Inaccessibles}
\author{Sy-David Friedman (KGRC, Vienna) and Giorgio Laguzzi (U.Freiburg)}
\newtheorem{definition2}{Definition}
\newtheorem{remark2}[definition2]{Remark}
\newtheorem*{Mobservation2}{Main Observation}
\newtheorem{example2}[definition2]{Example}
\newtheorem{proposition}[definition2]{Proposition}
\newenvironment{definition}{\begin{definition2} \upshape}{\end{definition2}}
\newenvironment{remark}{\begin{remark2} \upshape}{\end{remark2}}
\newcommand{\cantor}{2^\omega}
\newcommand{\cohen}{\poset{C}}
\newcommand{\conc}{\smallfrown}
\newcommand{\cov}{\textsf{cov}}
\newcommand{\club}{\textsf{Cub}}
\newcommand{\clubsacks}{\sacks^\club}
\newcommand{\DDelta}{\mathbf{\Delta}}
\newcommand{\enfa}{\textit}
\newcommand{\ideal}{\mathcal}
\newcommand{\IF}{\ideal{I}_{\T}}
\newcommand{\force}{\Vdash}
\newcommand{\height}{\mathsf{ht}}
\newcommand{\meager}{\ideal{M}}
\newcommand{\ns}{\textsf{NS}}
\newcommand{\poset}{\mathbb}
\newcommand{\rank}{\textsf{Rank}}
\newcommand{\restric}{{\upharpoonright}}
\newcommand{\sacks}{\poset{S}}
\newcommand{\SSigma}{\mathbf{\Sigma}}
\newcommand{\splitting}{\textsf{Split}}
\newcommand{\stem}{\textsf{Stem}}
\newcommand{\successor}{\textsf{succ}}
\newcommand{\T}{\poset{F}}
\newcommand{\term}{\textsc{Term}}
\begin{document}

\maketitle

\begin{abstract}

In this paper we introduce a tree-like forcing notion extending some properties of the random forcing in the context of
$2^\kappa$, $\kappa$ inaccessible, and study its associated ideal of null sets and notion of measurability.
This issue was also addressed by Shelah (\cite[Problem 0.5]{Sh14}) and concerns the definition of a forcing which is $\kappa^\kappa$-bounding, $<\kappa$-closed and $\kappa^+$-cc, for $\kappa$ inaccessible. This also contributes to a line of research adressed in the survey paper \cite{KLLS16}.

\end{abstract}

\section{Introduction}

In \cite[Problem 0.5(2)]{Sh14}, Shelah addresses the following question:

\vspace{2mm}

Can one define a forcing which is $\kappa^+$-cc, $<\kappa$-closed and $\kappa^\kappa$-bounding, for $\kappa$ inaccessible?

\vspace{2mm}

In \cite{Sh12}, Shelah provides a positive answer when $\kappa$ is weakly compact. More recently, Cohen and Shelah (see \cite{SC16}) introduce a forcing notion satisfying the three properties mentioned for $\kappa$ inaccessible, and not necessarily weakly compacy. A construction different from the one described by Cohen and Shelah \cite{SC16} is presented here. For this purpose, we need a certain version of $\Diamond$, modelling our construction on work of Jensen \cite{J68} in the case $\kappa=\omega$. 
The main differences from \cite{SC16} are the following: we use different versions of diamond; the use of diamond is different; and finally, our construction is somehow simpler, but also it is more restrictive, as our diamond hypothesis implies $2^\kappa=\kappa^+$. \footnote{We thank the referee to make us aware of \cite{SC16}, and to point these differences out.}

Note that in the standard case, i.e., when $\kappa=\omega$, the random forcing fits with the three properties. So the first attempts to resolve such a problem might be to generalize it for $\kappa$ inaccessible. Hence, since random forcing is usually defined by means of the Lebesgue measure in $\cantor$, the natural way would be to define an appropriate measure in $2^\kappa$ as well. Nevertheless, there seem to be many obstacles in trying to do so. Our method for defining our forcing $\T$ will not use any notion of measure.

Note that even if we will not define a measure on $2^\kappa$, we will define an ideal of $\T$-null sets and a notion of measurability associated to it, in a rather standard way. We will then investigate such a regularity property.

The paper is organized as follows. In section \ref{main} we present the construction of our forcing $\T$. Section \ref{ideal} is devoted to introducing the ideal of null sets and the notion of measurability, and to proving some (negative) results about $\DDelta^1_1$ sets and the club flter. A final section is then dedicated to some concluding remarks and possible further developments.

The first author wishes to thank the FWF (Austrian Science Fund) for its support through 
Project P25748.

\section{Preliminaries}

In this preliminary section we simply introduce the basic notions and notation which is needed throughout the paper.

\begin{itemize}

\item A tree $T$ is a subset of $2^{<\kappa}$, closed under initial segments. $\stem(T)$ denotes the longest node of $T$ compatible with all the other nodes of $T$; $\successor(t,T):=\{\xi <\kappa: t^\conc \xi \in T \}$; \splitting(T) is the set of splitting nodes of $T$ (i.e., $t \in \splitting(T)$ if both $t^\conc 0$ and $t^\conc 1 \in T$); we put $\height(T):= \sup \{\alpha: \exists t \in T (|t|=\alpha) \}$, while $\term(T)$ denotes the \enfa{terminal} nodes of $T$ (i.e., $t \in \term(T)$ if there is no $t' \supsetneq t$ such that $t' \in T$). For $\alpha < \kappa$,  $T\restric \alpha:= \{ t \in T: |t| < \alpha \}$. A \emph{branch} through $T$ of height $\kappa$ is the limit of an increasing cofinal sequence $\{ t_\xi: \xi < \kappa \}$ of nodes in $T$, and $[T]$ will denote the set of all branches of $T$. For $t$ in a tree $T$, $T_t$ is the set of nodes in $T$ compatible with $t$. 

\item Given $t \in \splitting(T)$, we define the rank of $t$ as the order type of $\{\alpha <|t|: t \restric \alpha \in \splitting(T)\}$. Furthermore, we let $\splitting_\beta(T)$ denote the set of splitting nodes in $T$ of rank $\beta$. The forcing $\clubsacks$ consists of trees $T$ such that every node can be extended to a splitting node and for some club $C$ in $\kappa$, the splitting nodes of $T$ are exactly those with length in $C$. For $T,T'$ in $\clubsacks$ and $\gamma<\kappa$, we write $T\leq_\gamma T'$ iff $T$ is a subtree of $T'$ such that $\splitting_\gamma(T)=\splitting_\gamma(T')$; $\leq_0 $ is simply denoted by $\leq$. 

Note that $\clubsacks$ is closed under $\leq_\gamma$-descending sequences of length less than $\kappa$ for each $\gamma<\kappa$. 

\item If $\{ \T_\alpha: \alpha < \kappa \}$ is a sequence of families of trees such that $\T_\alpha \subseteq \T_{\alpha+1}$, then for every tree $T \in \bigcup_{\alpha<\kappa} \T_\alpha$ define $\rank(T)$ to be the least $\alpha<\kappa$ such that $T \in \T_{\alpha+1}$. $\T_{<\lambda}$ denotes the union of the $\T_\alpha$ for $\alpha<\lambda$. 

%\item We say that a tree $T$ is \emph{perfect} (or $\sacks$) iff for every $t\in T$ there is $t' \supseteq t$ such that $t' \in \splitting(T)$, and we write $T \in \sacks$. Moreover we denote by $\clubsacks$ the poset consisting of Sacks trees $T$ satisfying the further requirement that for all $x \in [T]$, one has $\{\alpha<\kappa: x \restric \alpha \in \splitting(T)  \}$ is closed unbounded.

\item A forcing $\poset{P}$ is called $\kappa^\kappa$-bounding iff for every $x \in \kappa^\kappa \cap V^{\poset{P}}$ there exists $z \in \kappa^\kappa \cap V$ such that $\force \forall \alpha < \kappa (x(\alpha) < z(\alpha))$.

\item In this paper $\cohen$ refers to the $\kappa$-Cohen forcing, i.e., the poset consisting of $t \in 2^{<\kappa}$, ordered by extension. The elements of $2^\kappa$ and $\kappa^\kappa$ are called $\kappa$-reals. 

\end{itemize}

Under the assumption $2^{<\kappa}=\kappa$, $\cohen$ is obviously
$\kappa^+$-cc, but also adds unbounded $\kappa$-reals, which means it
is not $\kappa^\kappa$-bounding. For $\kappa$ inaccessible
$\clubsacks$ is $\kappa^\kappa$-bounding, but one loses
the $\kappa^+$-cc. The next section is devoted to defining a refinement of
$\clubsacks$ in order to obtain the $\kappa^+$-cc and maintain
$\kappa^\kappa$-boundedness.

\section{The main construction} \label{main}

Fix $\kappa$ to be inaccessible. As we mentioned before, our first main goal is to define a tree-forcing $\T$ with the following three properties: $\kappa^+$-cc, $\kappa^\kappa$-bounding and $<\kappa$-closure. Assume $\Diamond_{\kappa^+}(S_\kappa^{\kappa^+})$, where $S_\kappa^{\kappa^+}:= \{ \lambda < \kappa^+: \text{cf}(\lambda)=\kappa \}$.

We construct an increasing  sequence of tree forcings $\langle \T_\lambda: \lambda < \kappa^+ \rangle$ by induction on $\lambda < \kappa^+$. We first remark that for all $\lambda < \kappa^+$ we will maintain the following: 

\begin{itemize}

\item[(P1)] $\T_\lambda \subseteq \clubsacks$ and $|\T_\lambda| \leq \kappa$;

\item[(P2)] $ \forall T \in \T_{<\lambda} \forall \gamma < \kappa \exists T' \leq_\gamma T \forall T'' \leq T'(T' \in \T_\lambda \land T'' \notin \T_{<\lambda})$;

\item[(P3)] $\forall T \in \T_\lambda \forall t \in T (T_t \in \T_\lambda)$;

\item[(P4)] $\T_\lambda$ is closed under descending $<\kappa$-sequences;

\item[(P5)] $\forall \alpha < \lambda \forall T \in \T_\lambda
 \setminus \T_\alpha \exists \bar \gamma < \kappa \forall \gamma \geq
 \bar \gamma \forall t \in \splitting_\gamma(T) \exists S \in
\T_\alpha \setminus \T_{<\alpha} (T_t \subseteq S)$. 

\end{itemize}

We remark that in P2 the property we are really interested in is P2bis: $ \forall T \in \T_{<\lambda} \forall \gamma < \kappa \exists T' \leq_\gamma T (T' \in \T_\lambda \setminus \T_{<\lambda})$; the extra requirement on all $T'' \leq T'$ is only needed to make sure that such a property will be preserved in our recursive construction.

Furthermore, P5 will be used to help ensure the $\kappa^+$-cc.

\vspace{2mm}

Let $\{ D_\lambda: \lambda < \kappa^+ \}$ be a $\Diamond_{\kappa^+}(S^\kappa_{\kappa^+})$-sequence. The recursive construction is developed as follows:

\begin{enumerate}

\item $\T_0 :=\{ (2^{<\kappa})_t : t \in 2^{<\kappa} \}$.

\item Case $\lambda+1$:

For every $T \in \T_\lambda\setminus \T_{<\lambda}$ and $\gamma < \kappa$, pick $T' \in \clubsacks$ such that $T' \leq_\gamma T$ and $T'$ does not contain subtrees in $\T_{\lambda}$; this is possible as $\T_\lambda$ has cardinality $\kappa$. Then for all $t \in T'$ we add $T'_t$ to  $\T_{\lambda+1}$.

Then for every $T \in \T_{\lambda+1}\setminus \T_{\lambda}$, $S \in \T_{\lambda+1}$ and $\sigma: T \rightarrow 2^{<\kappa}$ the canonical isomorphism, put $\sigma^{-1}[S] \in \T_{\lambda+1}$. (This is indeed needed to have a suitable notion of $\T$-measurability as explained in Proposition \ref{meas-regular}.)

We finally close $\T_{\lambda+1}$ under descending $<\kappa$-sequences, i.e., for every descending $\{  T^i: i < \delta \}$ in $\T_{\lambda+1}$, with $\delta < \kappa$, we put $T^* := \bigcap_{i<\delta}T^i$ into $\T_{\lambda+1}$. 

\item Case $\text{cf}(\lambda)< \kappa$:  
Let $\T_\lambda$ be the closure of $\T_{<\lambda}$ under descending $<\kappa$-sequences.

\item Case $\text{cf}(\lambda)= \kappa$, where $(\lambda_i : i < \kappa)$ is increasing and cofinal in $\lambda$: 

\begin{enumerate}

\item Suppose $D_\lambda \subseteq \lambda$ codes a maximal antichain $A_\lambda$ in $\T_{<\lambda}$. For every $T \in \T_{<\lambda}$ and $\gamma < \kappa$, construct a ``$\kappa$-fusion'' sequence $\{ T^i: i < \kappa \}$ of trees in $\clubsacks$ such that

\begin{enumerate}

\item $T=:T^0 \geq_\gamma T^1 \geq_{\gamma+1} T^2 \geq_{\gamma+2} \dots \geq_{\gamma+i} T^{i+1} \geq_{\gamma+i+1} \dots$

\item $T^i_t$ belongs to $\T_{<\lambda}$ with $\rank(T^i_t)$ at least $\lambda_i$ for each $t$ in $\splitting_\gamma(T)$. 

\item $T^1 := \bigcup\{ S_t: t \in \splitting_\gamma(T) \}$, where each $S_t \leq T_t$ and $S_t$ hits $A_\lambda$, i.e., there exists $S^* \in A_\lambda$ such that $S_t \leq S^*$.

\end{enumerate}

Then add $T^*:=\bigcap_{i <\kappa} T^i$ to $\T_\lambda$. Moreover, for every $t \in T^*$, add $T^*_t$ to $\T_\lambda$ too. Finally close $\T_\lambda$ under descending $<\kappa$-sequences.

\item Suppose that $D_\lambda \subseteq \lambda$ codes  $\{ A_{i,j}: i < \kappa, j < \kappa \}$, where for each $i < \kappa$, $\bigcup_{j<\kappa}A_{i,j}$ is a maximal antichain in $\T_{<\lambda}$ and $j_0 \neq j_1 \Rightarrow A_{i,j_0} \cap A_{i,j_1}= \emptyset$. For every $T \in \T_{<\lambda}$ and $\gamma < \kappa$, build a $\kappa$-fusion sequence $\{ T^i: i < \kappa \}$ of trees in $\clubsacks$ such that

\begin{enumerate}

\item $T=:T^0 \geq_\gamma T^1 \geq_{\gamma+1} T^2 \geq_{\gamma+2} \dots \geq_{\gamma+i} T^{i+1} \geq_{\gamma+i+1} \dots$

\item $T^i_t$ belongs to $\T_{<\lambda}$ with $\rank(T^i_t)$ at least $\lambda_i$ for $t$ in $\splitting_{\gamma+i}(T^i)$. 

\item for every $i < \kappa$, $T^{i+1} := \bigcup\{ S^{i+1}_t: t \in \splitting_{\gamma+i}(T^i) \}$, where each $S^{i+1}_t \leq T^i_t$ and $S^{i+1}_t$ hits $\bigcup_{j < \kappa}A_{i,j}$.

\end{enumerate}

Then add $T^*:=\bigcap_{i <\kappa} T^i$ to $\T_\lambda$. Moreover, for every $t \in T^*$, add $T^*_t$ to $\T^*_\lambda$  too. Finally close $\T_\lambda$ under descending $<\kappa$-sequences.

\item If $D_\lambda$ neither codes a maximal antichain (case (a)) nor an instance of $\kappa^\kappa$-bounding (case (b)), then proceed as in case (a) without its item iii. 

\end{enumerate} 

\end{enumerate}

Finally let $\T := \bigcup_{\lambda < \kappa^+}\T_\lambda$.

\begin{proposition}

The construction of $\T$ satisfies the five properties P1-P5.

\end{proposition}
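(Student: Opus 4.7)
I would prove this by induction on $\lambda<\kappa^+$, verifying each of P1--P5 in the four cases of the construction (base, successor, limit of cofinality $<\kappa$, limit of cofinality $\kappa$). The base case $\lambda=0$ is immediate: $\T_0$ has cardinality $\kappa$, each $(2^{<\kappa})_t$ lies in $\clubsacks$ with splitting club the full $\{\alpha<\kappa\}$, P3 holds trivially, and P2, P4, P5 are vacuous.

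For the successor step $\lambda+1$, properties P3 and P4 are explicit from the construction. For P1 I would count: at most $\kappa\cdot\kappa=\kappa$ pairs $(T,\gamma)\in(\T_\lambda\setminus\T_{<\lambda})\times\kappa$ produce the new $T'$'s, each with $\kappa$-many $T'_t$'s; the canonical-isomorphism closure adds at most $\kappa\cdot\kappa^{<\kappa}=\kappa$ further trees using inaccessibility, and the final $<\kappa$-closure contributes $\kappa^{<\kappa}=\kappa$ more. For P2, given $T\in\T_\lambda$ and $\gamma<\kappa$: if $T\in\T_\lambda\setminus\T_{<\lambda}$ the construction directly supplies $T'$; if $T\in\T_{<\lambda}$, use IH~P2 at stage $\lambda$ to obtain $T^*\leq_\gamma T$ in $\T_\lambda\setminus\T_{<\lambda}$ and then apply the construction to $T^*$, with transitivity of $\leq_\gamma$ closing the argument. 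For P5, given $\alpha\leq\lambda$ and fresh $T\in\T_{\lambda+1}\setminus\T_\lambda$, note $T\subseteq T_0$ for some $T_0\in\T_\lambda\setminus\T_{<\lambda}$: if $\alpha=\lambda$ take $S=T_0$ with $\bar\gamma=0$; if $\alpha<\lambda$ apply IH~P5 to $T_0$ and pull back through $T_t\subseteq(T_0)_t$, shifting $\bar\gamma$ to absorb the splitting-rank offset between $T$ and $T_0$.

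For the limit case $\text{cf}(\lambda)<\kappa$, properties P3 and P4 are immediate since $\T_\lambda$ is just the $<\kappa$-closure of $\T_{<\lambda}$, and P1 follows from $|\T_{<\lambda}|\leq\kappa\cdot\text{cf}(\lambda)=\kappa$ together with $\kappa^{<\kappa}=\kappa$. P5 transfers directly from the IH applied at the stage where the underlying tree was added. For P2, given $T\in\T_{<\lambda}$ and $\gamma<\kappa$, I would build a descending $\leq_{\gamma+n}$-sequence $(T_n)$ with $T_n\in\T_{\beta_n}\setminus\T_{<\beta_n}$ along a cofinal $(\beta_n)$ in $\lambda$, repeatedly invoking IH~P2 so that no subtree of $T_n$ lies in $\T_{<\beta_n}$; then $T':=\bigcap_n T_n\in\T_\lambda$ has the property that every $T''\leq T'$ lies below every $T_n$, so $T''$ cannot belong to $\T_\eta$ for any $\eta<\lambda$.

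The case $\text{cf}(\lambda)=\kappa$ is where the real work concentrates. P1, P3, P4 go as before. For P2 the fusion construction itself supplies $T^*\leq_\gamma T$ for every $T\in\T_{<\lambda}$; to verify $T^*$ has no subtrees in $\T_{<\lambda}$, I would argue that a hypothetical $T''\leq T^*$ with $T''\in\T_\eta$ would sit below some $T^i_t$ for $t\in\splitting_{\gamma+i}(T^i)$ above $\stem(T'')$ and $i$ chosen with $\lambda_i>\eta$, contradicting IH~P2 at stage $\rank(T^i_t)+1$ since clause~(ii) guarantees $\rank(T^i_t)\geq\lambda_i$. The hardest verification is P5 at this step: given $T=T^*$ and $\alpha<\lambda$, I would pick $i$ with $\lambda_i>\alpha$, set $\bar\gamma:=\gamma+i$, and for each $t\in\splitting_{\bar\gamma}(T^*)$ use $T^*_t\subseteq T^i_t$ together with IH~P5 applied to $T^i_t\in\T_{<\lambda}\setminus\T_\alpha$ to extract the required $S\in\T_\alpha\setminus\T_{<\alpha}$. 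The main obstacle throughout is aligning the two splitting-rank indexings (in $T^*$ versus in each $T^i$ and in the original $T$) so that clause~(ii) correctly feeds the inductive form of P5; this alignment is exactly what clause~(ii) of the fusion recipe is engineered to support, and careful bookkeeping at this point is where the proof actually has content.
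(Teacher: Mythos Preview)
Your proposal is correct and follows essentially the same approach as the paper: induction on $\lambda$ with the four-way case split, handling P1, P3, P4 as immediate and concentrating the work on P2 and P5 via descending $\leq_\gamma$-sequences (respectively fusion sequences) with ranks cofinal in $\lambda$. The emphasis differs slightly---you are more explicit about the splitting-rank alignment bookkeeping in the $\mathrm{cf}(\lambda)=\kappa$ case of P5, whereas the paper spells out the $\mathrm{cf}(\lambda)<\kappa$ case of P5 in more detail (taking $\gamma^*=\sup_i\gamma_i$ rather than your ``transfers directly'')---but the underlying argument is the same.
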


\begin{proof}

P1 is clear, since at any stage we only add $\kappa$ many new trees which are in $\clubsacks$. Also P3 and P4 follow immediately from the construction.

For P2, note that the successor case $\lambda+1$ follows easily from
the construction; for $\text{cf}(\lambda) < \kappa$, start with $T \in
\T_{<\lambda}$ and use induction to build a descending
$\leq_\gamma$-descending sequence $\{ T^i:i<\text{cf}(\lambda) \}$
such that $\{ \rank(T^i): i < \text{cf}(\lambda)\}$ is cofinal in
$\lambda$ and put $T^*:= \bigcap_{i < \text{cf}(\lambda)} T^i$.  We
may additionally require that $T^i$ contains no subtree in
$\T_{<\rank(T^i)}$ and so $T^*$ contains no subtree in $\T_{<\lambda}$
(in particular, $T^*$ does not belong to $\T_{<\lambda}$); finally for
the case $\text{cf}(\lambda)=\kappa$ we argue similarly, using the
fact that we take fusion sequences with tree-ranks cofinal in
$\lambda$.

For P5, we distinguish again the three different situations. In the
successor case $\lambda+1$, we can have: case 1) $\alpha < \lambda$, so simply pick some $T_0 \supseteq T$ in $\T_\lambda$ and use the inductive hypothesis; case 2) $\alpha=\lambda$, so pick $T_0$ as in case 1) and use it as the $S$ needed to satisfy P5.
In case $\text{cf}(\lambda) < \kappa$, as
above start with $T \in \T_{<\lambda}$ and use induction to build a
descending $\gamma$-sequence of length $\text{cf}(\lambda)$ such that
$\{ \rank(T^i): i < \text{cf}(\lambda)\}$ is cofinal in $\lambda$ and
put $T^*:= \bigcap_{i < \text{cf}(\lambda)} T^i$. Let $\alpha <
\lambda$ and for $i< \text{cf}(\lambda)$ such that $\alpha <
\rank(T^i)$ choose $\gamma_i$ sufficiently large so that for all $t
\in \splitting_{\geq \gamma_i}(T^i)$ one has $T^i_t$ is contained in a
tree of $\T_\alpha \setminus \T_{<\alpha}$; then, if $\gamma^*:=
\sup\{ \gamma_i: i < \text{cf}(\lambda) \}$, for every $t \in
\splitting_{\geq \gamma^*}(T)$ one has  $T_t$ is contained in a tree
in $\T_\alpha \setminus \T_{<\alpha}$. The case
$\text{cf}(\lambda)=\kappa$ is treated similarly, by using a sequence
with tree-ranks cofinal in $\lambda$.    
\end{proof}

\begin{proposition} \label{prop:main}

$\T$ is $<\kappa$-closed, $\kappa^+$-cc and $\kappa^\kappa$-bounding.

\end{proposition}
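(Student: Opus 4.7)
The plan is to verify the three properties in the order \emph{$<\kappa$-closure, $\kappa^+$-cc, $\kappa^\kappa$-bounding}, since the bounding argument will use $\kappa^+$-cc to reduce names to bounded stages of the construction. For $<\kappa$-closure, take a descending $\delta$-sequence $\{T^i : i<\delta\}$ with $\delta<\kappa$ and set $\lambda^*:=(\sup_i \rank(T^i))+1<\kappa^+$, so the whole sequence lies inside $\T_{<\lambda^*}$. The explicit closure of $\T_{\lambda^*}$ under descending $<\kappa$-sequences (via the successor clause of the construction, or via the $\text{cf}(\lambda^*)<\kappa$ clause) puts $T^*:=\bigcap_i T^i$ into $\T_{\lambda^*}$, and a standard fusion check shows $T^*\in\clubsacks$, yielding the required lower bound.

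For $\kappa^+$-cc, suppose for contradiction that $A\subseteq\T$ is an antichain of size $\kappa^+$. First I would argue that
\[
S := \{\lambda\in S^{\kappa^+}_\kappa : A\cap\T_{<\lambda}\ \text{is maximal in}\ \T_{<\lambda}\}
\]
is stationary in $\kappa^+$ by a closure/Fodor argument: for each $T\in\T$ pick a compatible $a_T\in A$ together with a common refinement $r_T\in\T$, and close (including passage to $T_t$) under these choices up to ordinals of cofinality $\kappa$, which is possible because $|\T_{<\lambda}|\leq\kappa$. Using a canonical coding of $\T_{<\lambda}$ inside $\lambda$, $\Diamond_{\kappa^+}(S^{\kappa^+}_\kappa)$ then yields $\lambda\in S$ with $D_\lambda$ coding $A_\lambda:=A\cap\T_{<\lambda}$. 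Case (a) of the construction adds to $\T_\lambda$, for every $T_0\in\T_{<\lambda}$ and every $\gamma<\kappa$, a fusion tree $T^*_{T_0,\gamma}$ whose subtree above each $s\in\splitting_\gamma(T_0)$ lies below an element of $A_\lambda$; in particular $A_\lambda$ is pre-dense below every $S\in\T_\lambda\setminus\T_{<\lambda}$. Now choose $T\in A\setminus\T_\lambda$ (possible since $|A|>\kappa\geq|\T_\lambda|$); P5 applied with $\alpha=\lambda$ provides $\bar\gamma$ such that for every $t\in\splitting_{\geq\bar\gamma}(T)$, $T_t\subseteq S$ for some $S\in\T_\lambda\setminus\T_{<\lambda}$. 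Pre-density of $A_\lambda$ below $S$, combined with $T_t\subseteq S$, lets us descend inside $T_t$ to a node $t'$ with $T_{t'}\leq S^*$ for some $S^*\in A_\lambda\subseteq A$. Thus $T_{t'}$ witnesses compatibility of $T$ and $S^*$; since both lie in the antichain $A$, $T=S^*$, contradicting $T\notin\T_{<\lambda}$.

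For $\kappa^\kappa$-bounding, let $T_0\force\dot f:\kappa\to\kappa$. The $\kappa^+$-cc just established forces every maximal antichain $A_i$ deciding $\dot f(i)$ to have size at most $\kappa$, so the partition $A_i=\bigsqcup_{j<\kappa}A_{i,j}$ (with $A_{i,j}$ consisting of conditions forcing $\dot f(i)=j$) sits inside some $\T_{<\lambda^*}$. An analogous closure argument shows the set of $\lambda\in S^{\kappa^+}_\kappa$ above $\lambda^*$ at which each $A_i$ remains maximal in $\T_{<\lambda}$ below $T_0$ is stationary, and $\Diamond$ delivers such $\lambda$ with $D_\lambda$ coding the entire family $(A_{i,j})_{i,j<\kappa}$. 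Case (b) then constructs, from $T_0$ and some $\gamma<\kappa$, a fusion tree $T^*\leq T_0$ in $\T_\lambda$ such that for each $i<\kappa$ and each $t\in\splitting_{\gamma+i}(T^i)$ the subtree $T^*_t$ lies below an element of $A_{i,j(t,i)}$ for some $j(t,i)<\kappa$. Inaccessibility gives $|\splitting_{\gamma+i}(T^i)|\leq 2^{|\gamma+i|}<\kappa$, so
\[
z(i) := \sup\{j(t,i) : t\in\splitting_{\gamma+i}(T^i)\}+1 < \kappa
\]
defines a ground-model $z\in\kappa^\kappa$ with $T^*\force \forall i<\kappa\,(\dot f(i)<z(i))$.

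The main obstacle will be the reflection step common to the cc and bounding arguments: establishing that the relevant configuration (the restriction of $A$, respectively of $\{A_{i,j}\}$, to $\T_{<\lambda}$) is both captured correctly by $D_\lambda$ and still maximal in $\T_{<\lambda}$ on a stationary set of $\lambda$, combined with the P5-based tracking that every alleged counterexample outside $\T_{<\lambda}$ ultimately reduces, through its large-rank subtrees, to the freshly produced trees of case (a)/(b). The descending-sequence closure verification and the arithmetic producing $z$ are then routine.
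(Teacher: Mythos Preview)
Your proposal is correct and follows essentially the same architecture as the paper: closure from the explicit $<\kappa$-sequence closure in the construction, $\kappa^+$-cc via $\Diamond$-capture of a maximal antichain at some $\lambda$ of cofinality $\kappa$ combined with case~4(a) and P5, and $\kappa^\kappa$-bounding via $\Diamond$-capture of the antichain family and case~4(b).

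Two minor presentational differences are worth noting. First, the paper argues the cc directly (take any maximal antichain $A$, show $A=A\cap\T_{<\lambda}$) rather than by contradiction from $|A|=\kappa^+$; this is cosmetic. Second, for bounding the paper does \emph{not} invoke $\kappa^+$-cc: it applies $\Diamond$ to the \emph{restrictions} $A_{i,j}\cap\T_{<\lambda}$ and uses a club-closure argument to ensure these restrictions are maximal in $\T_{<\lambda}$, so the two properties are established independently. Your route---use cc first to squeeze the full antichains into some $\T_{<\lambda^*}$---works too, but is an extra (harmless) step. Also, in your cc paragraph the phrase ``pre-density of $A_\lambda$ below $S$'' undersells what case~4(a) actually gives you and what you then use: for $S\in\T_\lambda\setminus\T_{<\lambda}$ one has the stronger fact that $S_s$ itself lies below an element of $A_\lambda$ for all sufficiently high splitting nodes $s$, which is exactly what licenses the ``descend to $t'$ with $T_{t'}\leq S^*$'' move.
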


\begin{proof}

The $<\kappa$-closure follows from point 3 of the construction.

To prove $\kappa^+$-cc we argue as follows. Let $A \subseteq \T$ be a maximal antichain and pick $\lambda$ such that $\text{cf}(\lambda)=\kappa$ and $A \cap \T_{<\lambda}$ is a maximal antichain of $\T_{<\lambda}$ and it is coded by $D_\lambda$, using $\Diamond_{\kappa^+}(S^\kappa_{\kappa^+})$. By 4.(a) of the construction, for every $T \in \T_\lambda \setminus \T_{<\lambda}$, there is $\gamma'$ such that for every $\gamma \geq  \gamma'$ for every $t \in \splitting_\gamma(T)$, $T_t$ is a subtree of some element of $A \cap \T_{<\lambda}$. By P5, if $T \in \T \setminus \T_\lambda$, there is $\gamma''\geq \gamma'$ such that for every $\gamma \geq  \gamma''$ for every $t \in \splitting_\gamma(T)$, $T_t$ is a subtree of some element of $\T_\lambda \setminus \T_{<\lambda}$. It follows that for any $T\in\T_\lambda \setminus \T_{<\lambda}$ there is $t \in T$ such that $T_t$ is a subtree of some element of $A \cap \T_{<\lambda}$, and therefore $A \cap \T_{<\lambda}$ is a maximal antichain in $\T$. So $A \cap \T_{<\lambda}=A$, which finishes the proof as $|\T_{<\lambda}|=\kappa$.

%\paragraph{Claim.}

%If $A_\lambda$ is a maximal antichain in $\T_{<\lambda}$ coded by $D_\lambda$, then it is a maximal antichain in $\T_\lambda$ as well.

%\begin{proof} By point 4.(a) and 4.(b) of the construction, any new $T \in \T_\lambda \setminus \T_{<\lambda}$ is build in order to hit the antichain $A_\lambda$. Since any old tree $T \in %\T_{<\lambda}$ hits $A_\lambda$ too, we get $A_\lambda$ remains a maximal antichain in $\T_\lambda$.

%\end{proof}

%\paragraph{Claim.}

%If $A_\lambda$ is a maximal antichain in $\T_\lambda$ coded by $D_\lambda$, then it is a maximal antichain in $\T$ as well.

%\begin{proof}

%It is sufficient to check that for all $\beta$, $\lambda < \beta < \kappa^+$, one has $A_\lambda$ is a maximal antichain in $\T_\beta$. But this can be proven by an easy induction; if $\beta$ %is either a successor or a limit with $\text{cf}(\beta) < \kappa$, then there is nothing to prove, since $\T_\beta= \T_{<\beta}$. If $\text{cf}(\beta)=\kappa$, then apply the previous claim.

%\end{proof}

%Note that there are stationary many $\lambda<\kappa$ such that $A \cap \lambda=A_\lambda$, where $A_\lambda$ is coded by $D_\lambda$. Hence, given any antichain $A$ in $\T$ there is $\lambda %< \kappa$ such that $A_\lambda = A \cap \lambda$. Since $A_\lambda$ is a maximal antichain in $\T$, it follows $A = A \cap \lambda$, and so $A$ has size $\lambda < \kappa^+$.

For $\kappa^\kappa$-bounding we argue as follows. Let $\dot x$ be an $\T$-name for an element of $\kappa^\kappa$ and $T \in \T$. Choose $\{A_{i,j}: i < \kappa, j < \kappa  \}$ so that for each $i < \kappa$, $\bigcup_{j < \kappa} A_{i,j}$ is a maximal antichain and elements of $A_{i,j}$ force $\dot x(i) = j$. Pick $\lambda < \kappa$ such that $T$ belongs to $\T_{<\lambda}$, $\text{cf}(\lambda)=\kappa$ and $D_\lambda$ codes such $\langle A_{i,j} \cap \T_{<\lambda}: i,j \in \kappa \rangle$. and they are maximal antichains as in 4(b). Hence, we can build a $\kappa$-fusion sequence in order to get $T' \leq T$ such that for each $i<\kappa$, $T'$ forces the generic to hit $\bigcup_{j \in J_i} A_{ij}$, where each $J_i \subseteq \kappa$ has size $\leq 2^i$. Define $z \in \kappa^\kappa \cap V$ by $z(i)=\sup J_i$; then $T' \force \forall i < \kappa, \dot x (i) \leq z(i)$.

\end{proof}

\section{Ideal and measurability} \label{ideal}

Once we have a tree forcing notion we can introduce a related ideal of \emph{small} sets.

\begin{definition}

A set $X \subseteq 2^\kappa$ is said to be $\T$-null iff for all $T
\in \T$ there exists $T' \in \T$, $T' \leq T$ such that $[T'] \cap X =
\emptyset$. Further let $\IF$ be the ideal consisting of all $\T$-null
sets. A set is $\T$-conull if its complement is in $\IF$.

\end{definition}

\begin{remark}

$\IF$ is a $\kappa^+$-ideal; let $\{ X_\alpha: \alpha < \kappa \}$ be a sequence of $\T$-null sets, and fix $T \in \T$. Using 4(b) of the construction of $\T$, build a $\kappa$-fusion sequence $\{ T_\alpha: \alpha < \kappa \}$ such that for all $\alpha < \kappa$, for all $\beta \leq \alpha$, $[X_\beta] \cap [T_\alpha]= \emptyset$. Then $T':= \bigcap_{\alpha < \kappa} T_\alpha$ has the desired property.

\end{remark}

One of the main properties of the null ideal in the standard framework is that of being \emph{orthogonal} to the meager ideal, i.e., the space can be partitioned into a meager piece and a null piece. We now prove that the same holds for $\IF$.

\begin{proposition}

There is $X \subseteq 2^\kappa$ such that $X \in \meager$ and $2^\kappa \setminus X \in \IF$.

\end{proposition}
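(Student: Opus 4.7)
The plan is to exploit the $\kappa^+$-cc of $\T$ established in Proposition \ref{prop:main} and to cover $2^\kappa$ up to a $\T$-null remainder by the bodies of a maximal antichain.

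First I would verify that for every $T \in \clubsacks$ the body $[T]$ is closed nowhere dense in $2^\kappa$. Closedness is automatic. For nowhere denseness, given $s \in 2^{<\kappa}$, since $T$ has splitting on a club cofinal in $\kappa$, pick a splitting node $t \supseteq s$ in $T$; then $t \conc 0 \in T$, but above $t \conc 0$ the tree is non-splitting up to the next element of the club, so exactly one of its immediate successors is in $T$. Choosing the other gives an extension $s' \supseteq s$ with $s' \notin T$, hence $[s'] \cap [T] = \emptyset$. In particular, $[T]$ is nowhere dense for every $T \in \T$.

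Next I would fix a maximal antichain $A = \{T_\alpha : \alpha < \lambda\}$ in $\T$ (which exists by Zorn's lemma), and note that $\lambda \leq \kappa$ by $\kappa^+$-cc. Setting
$$X := \bigcup_{\alpha < \lambda} [T_\alpha],$$
we obtain a $\kappa$-union of closed nowhere dense sets, so $X \in \meager$, once $\meager$ is interpreted as the $\kappa^+$-ideal of $\kappa$-unions of nowhere dense sets (the convention consistent with $\IF$ being a $\kappa^+$-ideal as noted in the preceding remark).

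Finally, to show $2^\kappa \setminus X \in \IF$, let $T \in \T$. By maximality of $A$, some $T_\alpha$ is compatible with $T$, so there is a common extension $T' \in \T$ with $T' \leq T$ and $T' \leq T_\alpha$. Since $T' \subseteq T_\alpha$, we obtain $[T'] \subseteq [T_\alpha] \subseteq X$, and hence $[T'] \cap (2^\kappa \setminus X) = \emptyset$, as required. No serious obstacle arises: the proposition reduces to the elementary observation that in a $\kappa^+$-cc forcing all of whose conditions have nowhere dense bodies, the union of a maximal antichain yields the desired meager $\T$-conull set.
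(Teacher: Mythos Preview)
Your overall architecture is exactly the paper's: take a maximal antichain of size $\leq\kappa$, let $X$ be the union of its bodies, and argue that $X$ is meager and $\T$-conull. The gap is in your claim that \emph{every} $T\in\clubsacks$ has nowhere dense body. This is false: the trees $(2^{<\kappa})_s$ lie in $\T_0\subseteq\T\subseteq\clubsacks$ (their splitting club is the tail $[\,|s|,\kappa)$), and $[(2^{<\kappa})_s]=[s]$ is open, not nowhere dense. Your argument breaks precisely at ``above $t\conc 0$ the tree is non-splitting up to the next element of the club'': when the club is a final segment of $\kappa$, consecutive ordinals all belong to it, there is no gap, and every node splits, so you can never escape $T$.

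The paper handles this by invoking property P2: any $T\in\T$ can be refined to a condition containing no subtree of the form $(2^{<\kappa})_s$, and for such a tree $[T]$ \emph{is} nowhere dense (given $s\in T$, the failure of $(2^{<\kappa})_s\subseteq T$ yields $s'\supseteq s$ with $s'\notin T$). One then takes the maximal antichain inside this dense set $\T^*$ rather than in $\T$ itself. With that one-line correction your proof goes through and coincides with the paper's.
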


\begin{proof}

Let $A:= \{ A_i: i < \kappa \}$ be a maximal antichain in
$\T$. Clearly, $X:= \bigcup_{i < \kappa} [A_i]$ is $\T$-conull, since
for every $T \in \T$, there is $i < \kappa$ such that $A_i$ and 
$T$ are compatible, and so there is $T' \leq A_i$ such that $T' \leq T$. It is then
sufficient to show that we can find such an antichain $A$ with the
further property that any $[A_i]$ is nowhere dense. But note that by
property P2, any $T\in\T$ can be extended to contain no subtree of the
form $(2^{<\kappa})_s$ for $s\in 2^{<\kappa}$ and $[T]$ is nowhere
dense for such a tree $T$. 
Now let $\T^* \subseteq \T$ be the dense set of such trees, and pick
$A$ a maximal antichain in $\T^*$. Then $A$ remains a maximal
antichain in $\T$ as well, and it is then enough for our purpose.  

\end{proof}

\paragraph{Measurability. }

There are essentially two possible notions of regularity related to $\T$.

\begin{definition} \label{def:measurable}
A set $X \subseteq 2^\kappa$ is said to be:
\begin{enumerate}
\item \emph{$\T$-measurable} iff for every $T \in \T$ there exists $T' \in \T$, $T' \leq T$ such that $[T'] \setminus X \in \IF$ or $X \cap [T'] \in \IF$.
\item \emph{$\T$-regular} iff there exists a Borel set $B$ such that $X {\vartriangle} B \in \IF$.
\end{enumerate}
\end{definition}
Concerning definition \ref{def:measurable}.1, we could equivalently require ``$[T'] \subseteq X$ or $[T'] \cap X = \emptyset$'', as $\IF$ is a $\kappa^+$-ideal.

\begin{proposition}  \label{meas-regular}
Let $X \subseteq 2^\kappa$.
$X$ is $\T$-measurable iff $X$ is $\T$-regular.
\end{proposition}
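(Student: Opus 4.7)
The plan is to prove each implication separately: extract a Borel representative for a $\T$-measurable $X$ via a maximal antichain of deciding conditions, and dually show every Borel set is $\T$-measurable (so that regularity transfers to $X$) by induction on Borel rank.

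For ($\T$-measurable $\Rightarrow$ $\T$-regular), I would let $D := \{T \in \T : [T] \setminus X \in \IF \text{ or } [T] \cap X \in \IF\}$, which is dense in $\T$ by hypothesis. Pick a maximal antichain $A \subseteq D$; since $D$ is dense, $A$ remains maximal in all of $\T$, and by $\kappa^+$-cc (Proposition \ref{prop:main}) one has $|A| \leq \kappa$. Partition $A = A^+ \sqcup A^-$ according to whether $[T] \setminus X \in \IF$ or $[T] \cap X \in \IF$, and set $B := \bigcup \{[T] : T \in A^+\}$; this is a $\leq\kappa$-union of closed sets, hence Borel. The inclusion $B \setminus X \subseteq \bigcup_{T \in A^+}([T] \setminus X)$ puts $B \setminus X$ into $\IF$ by $\kappa^+$-completeness, while $X \setminus B \subseteq \bigcup_{T \in A^-}([T] \cap X) \cup \bigl(2^\kappa \setminus \bigcup_{T \in A}[T]\bigr)$, and the residual piece is $\T$-null because any $T^* \in \T$ is compatible with some $T \in A$ (as in the orthogonality proposition preceding Definition \ref{def:measurable}). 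Hence $X \vartriangle B \in \IF$.

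For ($\T$-regular $\Rightarrow$ $\T$-measurable), given Borel $B$ with $X \vartriangle B \in \IF$ and any $T \in \T$, I would first choose $T_0 \leq T$ with $[T_0] \cap (X \vartriangle B) = \emptyset$; then it suffices to show every Borel set is $\T$-measurable and apply that to $B$ below $T_0$. I would establish $\T$-measurability of Borel sets by induction on rank: for an open $U$, if some $s \in T$ extending $\stem(T)$ lies in $U$ then $T_s \leq T$ and $[T_s] \subseteq U$, and otherwise $[T] \cap U = \emptyset$; closure under complements is immediate. For a $\leq\kappa$-union $X = \bigcup_{\alpha < \mu} X_\alpha$ of measurable sets: if there exist $T' \leq T$ and $\alpha$ with $[T'] \setminus X_\alpha \in \IF$ then $[T'] \setminus X \in \IF$; otherwise, for each $\alpha$ the set $D_\alpha := \{T' \leq T : [T'] \cap X_\alpha \in \IF\}$ is dense below $T$, and a maximal antichain $A_\alpha \subseteq D_\alpha$ below $T$ (of size $\leq \kappa$ by $\kappa^+$-cc) witnesses $[T] \cap X_\alpha \in \IF$ by the same covering argument as in the first direction; hence $[T] \cap X \in \IF$ by $\kappa^+$-completeness of $\IF$.

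The principal obstacle I expect is this inductive step for $\leq\kappa$-unions, which simultaneously invokes $\kappa^+$-cc (to bound each antichain by $\kappa$), $\kappa^+$-completeness of $\IF$ (to take the $\kappa$-union of the null pieces), and the orthogonality-style covering lemma for maximal antichains; once these three ingredients are deployed in tandem, the rest of the argument is essentially bookkeeping.
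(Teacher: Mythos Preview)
Your forward direction is essentially identical to the paper's: dense set of deciding conditions, maximal antichain of size $\leq\kappa$, take the union of the ``positive'' side as the Borel approximant.

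For the backward direction the paper takes a different route. Rather than proving $\T$-measurability of Borel sets directly below each condition, it invokes the homogeneity clause built into step~2 of the construction (pulling $\T$ back along the canonical isomorphism $\sigma:T\to 2^{<\kappa}$) and appeals to a Brendle--L\"owe--type equivalence: ``all Borel sets are $\T$-measurable'' reduces to the global dichotomy that every Borel $B$ either lies in $\IF$ or contains some $[T]$. With that reduction the union step is a one-liner: if no $C_\alpha$ contains a $[T]$ then each $C_\alpha\in\IF$, hence so is their union. Your approach avoids the homogeneity ingredient entirely and runs the induction locally; this is more self-contained and would work for tree forcings lacking such homogeneity, at the price of a heavier union step.

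There is a small gap in that union step, however. When you say the maximal antichain $A_\alpha\subseteq D_\alpha$ below $T$ ``witnesses $[T]\cap X_\alpha\in\IF$ by the same covering argument as in the first direction'', note that in the first direction your antichain was maximal in all of $\T$, whereas here $A_\alpha$ is only maximal \emph{below} $T$. So to see that the residual piece $[T]\setminus\bigcup_{S\in A_\alpha}[S]$ is in $\IF$ you must also handle $T^*\in\T$ that are \emph{incompatible} with $T$, and maximality of $A_\alpha$ says nothing there. The fix is short and already available from your base case: the closed set $[T]$ is $\T$-measurable, so below any such $T^*$ there is $T'\leq T^*$ with either $[T']\subseteq[T]$ or $[T']\cap[T]=\emptyset$; the first option forces $T'\subseteq T$ and hence $T'\leq T$, contradicting incompatibility, so the second holds. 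With this observation your argument goes through.
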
 

\begin{proof}

The proof is just as the general case of $\poset{P}$-measurability in the standard case. We give it here for completeness.

$\Rightarrow$: by assumption, the set $E:=\{ T \in \T: [T] \cap X \in \ideal{I}_\T \vee [T] \cap X^c \in \ideal{I}_\T \}$ is dense in $\T$. Then pick a maximal antichain $A$ in $E$ and put 
\[
B:= \bigcup\{ [T]: T \in A \land [T]\cap X^c \in \ideal{I}_\T \}.
\]

Note that $B$ is Borel ($\SSigma^0_2$), since $|A| \leq \kappa$. We claim that $X {\vartriangle} B \in \ideal{I}_\T$. Indeed, for every $T \in \T$ we have two cases: there is $S \leq T$ such that $[S] \cap X \in \ideal{I}_\T$; if we pick the unique $S' \in A$ such that $S \leq S'$ we get $[S] \cap (X {\vartriangle} B ) \in \ideal{I}_\T$, as $S' \in A$ implies $[S] \cap X^c \in \ideal{I}_\T$; otherwise, the specular situation occurs, i.e., there is $S \leq T$ such that $[S] \cap X^c \in \ideal{I}_\T$; so if we pick the unique $S' \in A$ such that $S \leq S'$ we get $[S] \cap (X {\vartriangle} B) \in \ideal{I}_\T$, as $S' \in A$ implies $[S] \subseteq B$.

$\Leftarrow$: by assumption, it is enough to show that any Borel set is $\T$-measurable. We do that by induction on the Borel hierarchy. By point 2 in the main construction of $\T$, we can generalize a result of Brendle and L\"owe (see \cite{BL99}), proving that all Borel sets are $\T$-measurable is equivalent to proving that for every Borel set $B$ there is $T \in \T$ such that $[T] \subseteq B$ or $[T] \cap B = \emptyset$. For $s \in 2^{<\kappa}$, $[s]$ is trivially $\T$-measurable, and if $B$ is $\T$-measurable, then by symmetry $B^c$ is $\T$-measurable too. Finally, if $B$ is the union of $\leq \kappa$ many Borel sets $\{ C_\alpha: \alpha < \delta \}$, with $\delta \leq \kappa$, then we have two cases: there is $\alpha < \delta$ and $T \in \T$ such that $[T]\subseteq C_\alpha \subseteq B$; or for all $\alpha<\delta$, $C_\alpha \in \ideal{I}_\T$, and so $B \in \ideal{I}_\T$ as well.    

\end{proof}

In \cite{L14} and \cite{S13} it was shown that for some tree forcing notions one can force all projective sets to be measurable. Nevertheless this is not the case for $\T$.
Indeed, next result shows that there is no hope for $\SSigma^1_1(\T)$ to be consistent.
\begin{proposition}
The club filter $\club$ is not $\T$-measurable.
\end{proposition}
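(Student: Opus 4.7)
The plan is to exploit the splitting--club structure of trees in $\T\subseteq\clubsacks$ to produce, for every condition $T$, branches lying on both sides of $\club$. Fix $T\in\T$ and let $C_T\subseteq\kappa$ be the club such that the splitting nodes of $T$ are exactly the nodes of length in $C_T$. Define two branches $x^1,x^0\in[T]$ by transfinite recursion: at each $\alpha\in C_T$ let $x^1(\alpha)=1$ and $x^0(\alpha)=0$, which is legal since every $s\in T$ with $|s|=\alpha\in C_T$ satisfies $s{}^\conc 0,s{}^\conc 1\in T$; at each $\alpha\notin C_T$ each branch follows the unique extension available in $T$. At limit stages the union of the values built so far is in $T$ by the standard closure of pruned trees in $\clubsacks$ under increasing sequences along branches.

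The first branch $x^1$ has value $1$ at every $\alpha\in C_T$, so its $1$-set contains the club $C_T$, giving $x^1\in\club$. The second branch $x^0$ has value $0$ on $C_T$; thus its $0$-set contains $C_T$ and is in particular stationary, so the $1$-set of $x^0$ contains no club and $x^0\notin\club$. Hence
\[
[T]\cap\club\neq\emptyset\quad\text{and}\quad [T]\setminus\club\neq\emptyset
\]
for every $T\in\T$.

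To conclude, fix an arbitrary $T\in\T$. For every $T'\leq T$ with $T'\in\T$, applying the above to $T'$ yields branches of $T'$ on both sides of $\club$. Therefore $T'$ itself witnesses $[T']\cap\club\notin\IF$: no $T''\leq T'$ can satisfy $[T'']\cap\club=\emptyset$, because $T''\in\T$ still admits the branch $x^1_{T''}\in\club$. Symmetrically $[T']\setminus\club\notin\IF$. This says that no $T'\leq T$ realises either disjunct in the definition of $\T$-measurability for $X=\club$, so $\club$ fails to be $\T$-measurable.

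The argument is essentially complete once one notices that the splitting club $C_T$ automatically forces one side of $\club$ regardless of $T$. The only mild technical care needed is to verify that the recursive branch construction survives limit stages (using that trees in $\clubsacks$ are pruned and closed along cofinal sequences) and to unwind correctly the quantifier structure of failing $\T$-measurability, namely that the statement to prove is ``both halves lie outside $\IF$ below some $T$'', which is in fact established below \emph{every} $T\in\T$.
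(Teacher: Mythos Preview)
Your argument is correct and follows essentially the same idea as the paper's proof: for every $T\in\T$ one exhibits branches of $T$ lying in $\club$ and in $\ns$, which immediately blocks both disjuncts in the definition of $\T$-measurability. The only cosmetic difference is that you phrase the branch construction via the splitting club $C_T$ (choosing $1$, respectively $0$, at each level in $C_T$), whereas the paper builds the branch as an increasing union of splitting nodes $t_\alpha$, always appending a $1$ (respectively $0$) right above each $t_\alpha$; and you spell out explicitly the reduction from ``$[T']\cap\club\neq\emptyset$ for all $T'$'' to ``$[T']\cap\club\notin\IF$'', which the paper leaves implicit via the remark that measurability is equivalently stated with $[T']\subseteq X$ or $[T']\cap X=\emptyset$.
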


\begin{proof}
Standard. Given an arbitrary $T \in \T$ it suffices to show that $[T]$ contains branches both in $\club$ and in $\ns$. We argue as follows: 
\begin{itemize}
\item let $t_0= \stem(T)$
\item for $\alpha < \kappa$ successor, pick $t_\alpha \supset {t_{\alpha-1}}^\conc 1$ such that $t_\alpha \in \splitting(T)$ and $|t_\alpha|$ is a limit ordinal.
\item for $\alpha < \kappa$ limit, pick $t_\alpha \supset {(\bigcup_{\xi<\alpha} t_\xi)}^\conc 1$ such that $t_\alpha \in \splitting(T)$ and $|t_\alpha|$ is a limit ordinal.
\end{itemize}

Finally put $x:= \bigcup_{\alpha<\lambda} t_\alpha$. Clearly, $x \in [T] \cap \club$.

Analogously, if in the previous choices of the $t_\alpha$'s we replace 1 by 0, we get $x \in [T] \cap \ns$. 

\end{proof}

We conclude by remarking that a standard construction shows that $\DDelta^1_1(\T)$ is consistently false (e.g., if $V=L$).

\section{Concluding remarks and open questions}
It would be interesting to prove that $\DDelta^1_1(\T)$ is consistently true. The usual way for forcing such a statement for a tree-forcing $\poset{P}$ is to take a $\kappa^+$-iteration of $\poset{P}$ with $\kappa$-support. The main point is to make sure that $\kappa^+$ is preserved. For our forcing $\T$ it is not clear whether it is the case. So we leave the following as an open question.

\begin{itemize}
\item[\textbf{Question}.] Does a $\kappa^+$-iteration of $\T$ with $\kappa$-support preserve $\kappa^+$? Or, can we modify $\T$ is order to let the latter work?
\end{itemize}

What remains also open is the last part of \cite[Question 3.1]{KLLS16}.

\begin{itemize}
\item[\textbf{Question}.] Can one define a tree forcing that is $<\kappa$-closed, $\kappa^\kappa$-bounding and $\kappa^+$-cc, for $\kappa$ successor?
\end{itemize}

We remark that our construction can in fact be applied to successor $\kappa$, yielding $< \kappa$-closure and the $\kappa^+$-cc, but $\kappa^\kappa$-bounding will fail.

\end{document}